\documentclass{article}

\usepackage{pascal}
\usepackage{pascalarxiv}

\newcommand{\TheTitle}{Fast and Stable Pascal Matrix Algorithms}


\title{\TheTitle}

\author{Samuel F. Potter\footnotemark[2]\ \footnotemark[3] \and Ramani Duraiswami\footnotemark[3]}

\begin{document}

\renewcommand{\thefootnote}{\fnsymbol{footnote}}
\footnotetext[2]{Corresponding author: sfp@umiacs.umd.edu.}
\footnotetext[3]{Department of Computer Science and Institute for
  Advanced Computer Studies, University of Maryland, College Park,
  MD.}  \renewcommand{\thefootnote}{\arabic{footnote}}

\maketitle

\begin{abstract}
  In this paper, we derive a family of fast and stable algorithms for
  multiplying and inverting $n \times n$ Pascal matrices that run in
  $O(n \log^2 n)$ time and are closely related to De Casteljau's
  algorithm for B\'{e}zier curve evaluation. These algorithms use a
  recursive factorization of the triangular Pascal matrices and
  improve upon the cripplingly unstable $O(n \log n)$ fast Fourier
  transform-based algorithms which involve a Toeplitz matrix
  factorization. We conduct numerical experiments which establish the
  speed and stability of our algorithm, as well as the poor
  performance of the Toeplitz factorization algorithm. As an example,
  we show how our formulation relates to B\'{e}zier curve evaluation.
\end{abstract}


\section{Introduction}

In this paper, we derive fast and stable algorithms for a family of
structured matrices, providing algorithms for matrix-vector
multiplication and inversion which are competitive in terms of their
time requirements and far more stable than previous algorithms. In
particular, we consider the upper and lower triangular Pascal
matrices, introduce $\ell_\infty$-normalized versions of them, and
consider the Bernstein matrices, which generalize the
$\ell_\infty$-normalized Pascal matrices. For $n \times n$ instances
of these matrices (and their transposes, inverses, and
inverse-transposes), we show empircally that a well-known $O(n^2)$
algorithm is extremely fast for small problem sizes, and is
stable. Our main result is the development of an $O(n \log^2 n)$
matrix-vector product for all of these matrices. We then use this
algorithm to obtain an $O(dn \log n)$ algorithm for evaluation of
B\'{e}zier curves of degree $n$ and dimension $d$.

Pascal matrices have seen some attention in recent years due to
applications in numerical computing and signal
processing. Applications include digital filter
design~\cite{psenicka2002bilinear}, coding
theory~\cite{stankovic2016pascal}, computer-aided
design~\cite{bezerra-sacht:computing-bezier-curves,
  bezerra2013efficient, delgado2015accurate}, image
processing~\cite{aburdene2005discrete}, discrete polynomial
transforms~\cite{aburdene1994unification}, and the translation theory
of the fast multipole
method~\cite{tang:fast-transforms-based-on-structured-matrices,
  fmm-book}.

The layout of the paper follows. In the following section, we
introduce our notation, the matrices that we will work with, and
collect some basic results. Following that, we introduce previous
algorithms; in particular, we present a useful $O(n^2)$ algorithm in a
more complete form than has appeared in the literature before, and
review the Toeplitz matrix factorization-based $O(n \log n)$
algorithm. In the fourth section, we present a recursive matrix
factorization which immediately yields our $O(n \log^2 n)$ algorithm;
we also consider some details related to its implementation. The last
three sections of the paper contain our numerical experiments,
followed by applications of our algorithm to the problem of B\'{e}zier
curve interpolation.


\section{Notation}

In this section, we introduce a normalized Pascal matrix which will be
seen to be a special case of Bernstein's matrix. This normalized
Pascal matrix occurs naturally in the translation theory of the fast
multipole method. Additionally, multiplication by Bernstein's matrix
is very closely related to De Casteljau's stable quadratic algorithm
for B\'{e}zier curve interpolation. We will consider this in more
detail later; however, since the essential details of the algorithms
we present in this work are unchanged by restricting our attention to
Pascal's matrix, we do so for the time being and for the sake of
clarity.

Throughout, we assume matrices and vectors to be zero-indexed. By
filling an $n \times n$ matrix with the rows of Pascal's triangle, we
arrive at three different matrices. If we start by defining
$\m{P} = \m{P}_n \in \R^{n \times n}$ by:
\begin{displaymath}
  \m{P}_{ij} = \begin{cases}
    {i \choose j} & \mbox{if } j \leq i, \\
    0 & \mbox{otherwise},
  \end{cases}
\end{displaymath}
referring to this as the $n \times n$ \emph{lower-triangular Pascal
  matrix}, we then have an \emph{upper-triangular Pascal matrix}
$\m{P}^\top$, and a \emph{symmetric Pascal matrix}
$\m{P}\m{P}^\top$. The fact that:
\begin{displaymath}
  {(\m{P}\m{P}^\top)}_{ij} = {i + j \choose j}
\end{displaymath}
is well-known, and a variety of illuminating proofs have been
explored~\cite{edelman-strang:pascal-matrices}.

The structure of Pascal's triangle endows these matrices with many
attractive properties which can be put toward algorithmic ends. A
property of central importance follows. If we define:
\begin{displaymath}
  \m{W} = \m{W}_n = \diag{({(-1)}^k)}_{k=0}^{n-1},
\end{displaymath}
then $\m{P}^{-1} = \m{W} \m{P} \m{W}$. This was proven some time ago
by Call and Velleman~\cite{call-velleman:pascals-matrices}, and casts
the inversion of each type of Pascal matrix in terms of matrix
multiplication. We will see that one family of algorithms neatly
handles both multiplication and inversion.

The simplest way of constructing Pascal's triangle is to simply use
the fact that each element is the sum of its two parents to
recursively extrapolate the triangle. This idea can be profitably
converted into a factorization of $\m{P}$. Letting
$\m{E} = \m{E}_n \in \R^{n \times n}$ be defined by:
\begin{displaymath}
  \m{E}_{ij} = \begin{cases}
    1 & \mbox{if } i = j \mbox{ or } i = j + 1, \\
    0 & \mbox{otherwise},
  \end{cases}
\end{displaymath}
we can see that $\m{E}$ is a bidiagonal matrix with ones on the
diagonal and subdiagonal and that $\m{P}$ satisfies the recurrence:
\begin{displaymath}
  \m{P}_n = \begin{pmatrix}
    1 & \\ & \m{P}_{n-1}
  \end{pmatrix} \m{E}_n.
\end{displaymath}
If we recursively apply this factorization to its maximum extent, we
obtain:
\begin{equation}\label{eq:E-factorization}
  \m{P}_n = \begin{pmatrix}
    \m{I}_{n-2} & \\ & \m{E}_{2}
  \end{pmatrix} \begin{pmatrix}
    \m{I}_{n-3} & \\ & \m{E}_3
  \end{pmatrix} \cdots \begin{pmatrix}
    \m{I}_2 & \\ & \m{E}_{n-2}
  \end{pmatrix} \begin{pmatrix}
    1 & \\ & \m{E}_{n-1}
  \end{pmatrix} \m{E}_n.
\end{equation}
This factorization is well-known and can be used to implement a
space-efficient algorithm for the multiplication of $\m{P}_n$ that
requires only floating point additions and runs in $O(n^2)$
time~\cite{bezerra-sacht:computing-bezier-curves,edelman-strang:pascal-matrices}. Our
goal will be to develop a recursive factorization inspired by
\cref{eq:E-factorization} that will enable an $O(n \log^2 n)$ matrix
multiplication. There are $O(n \log n)$ algorithms and other
$O(n \log^2 n)$ algorithms, but these algorithms are all woefully
unstable and hence unusable for computations large enough to benefit
from their asymptotic time
savings~\cite{tang:fast-algorithms-pascal-matrices,
  wang-zhou:fast-eigenvalue-algorithm,
  wang-linzhang:fast-algorithm-for-solving-linear-systems-of-pascal-type}. Our
factorization will prove to be competitive with $O(n \log n)$
algorithms due to its small asymptotic constant and is stable enough
for practical use.

Although the literature has focused on the Pascal matrix $\m{P}$, we
have found in applications that a normalized version of this matrix is
sometimes more natural, helping to issues with numerical underflow and
overflow typically encountered. As an added benefit, algorithms that
deal with this matrix are somewhat more stable.
\begin{definition}
  The \emph{$\ell_\infty$-normalized lower triangular Pascal matrix}
  $\m{Q} = \m{Q}_n \in \R^{n \times n}$ is given by:
  \begin{displaymath}
    \m{Q}_{ij} = \begin{cases}
      2^{-i} {i \choose j} & \mbox{if } i \geq j, \\
      0 & \mbox{otherwise}.
    \end{cases}
  \end{displaymath}
  As before, we define the corresponding upper triangular and
  symmetric matrices by $\m{Q}^\top$ and $\m{Q}\m{Q}^\top$. We note
  that the entries of $\m{Q}\m{Q}^\top$ are given by:
  \begin{displaymath}
    (\m{Q}\m{Q}^\top)_{ij} = 2^{-i-j} {i + j \choose j}.
  \end{displaymath}
\end{definition}

While the antidiagonals of the symmetric Pascal matrix
$\m{P}\m{P}^\top$ are comprised of the rows of Pascal's triangle, the
antidiagonals of $\m{Q}\m{Q}^\top$ are the rows of Pascal's triangle
normalized so they sum to unity (i.e. $\ell_1$-normalized). Likewise,
where the entries of each of the triangular Pascal matrices can be
construed in terms of the rows of Pascal's triangle, the entries of
the $\ell_\infty$-normalized versions of these matrices can be thought
of as having been constructed from the rows of a normalized Pascal's
triangle.

\begin{note}[inversion of $\m{Q}$]
  The normalized and unnormalized Pascal matrices can be related by a
  diagonal scaling matrix. We use the following notation: for a scalar
  $\delta$, we write:
  \begin{displaymath}
    \m{D}^{(\delta)} = \m{D}^{(\delta)}_n = \diag{(\delta^k)}_{k=0}^{n-1}.
  \end{displaymath}
  Then, $\m{Q} = \m{D}^{(1/2)}\m{P}$ holds. This relation lets us
  apply the preceding identity for matrix inversion:
  \begin{displaymath}
    \m{Q}^{-1} = {(\m{D}^{(1/2)}\m{P})}^{-1} = \m{P}^{-1} {(\m{D}^{(1/2)})}^{-1} =
    \m{W}\m{P}\m{D}^{(2)}\m{W}.
  \end{displaymath}
\end{note}


\section{Prior Art}

Two main sets of algorithms have been considered previously for the
multiplication and inversion of $\m{P}$ and $\m{Q}$. The algorithms
for each of $\m{P}$ and $\m{Q}$ are materially the same, so we will
restrict our attention to $\m{Q}$, since the algorithms for
multiplying and inverting $\m{P}$ are a slight simplification of the
latter.

The first set of algorithms involves factoring $\m{Q}$ into a product
of bidiagonal matrices, and the second factors $\m{Q}$ into a Toeplitz
matrix. Although the former algorithms take $O(n^2)$ time to multiply
or invert $\m{Q}_n$, they require very few additions and
multiplications (the asymptotic constant is quite small) and can be
made to run inplace---that is, using $O(1)$ additional space beyond
the $O(n)$ space required for storage of the input and output
vectors. This set of algorithms has the virtue of being extremely
stable, which has been observed in
practice~\cite{xiao-guang:new-algorithm-linear-systems-pascal}, and
also explored theoretically in some
depth~\cite{alonso2013conditioning}. While this is not explored here,
they also parallelize well.

On the other hand, despite their $O(n \log n)$ time complexity, the
latter algorithms are extremely unstable, so much so that they are
unusable for all but the smallest $n$. In fact, as will be explored
later, the quadratic algorithms discussed above are fast enough for
small problem sizes on modern computer architectures that they are
faster and more stable for every problem size for which the
$O(n \log n)$ algorithms are stable enough to be used, rendering these
FFT-based algorithms useless. Part of this is due to their
implementation and consequently \emph{large} asymptotic constant.

We first present the quadratic algorithms, since our own algorithm
requires them as a base case in a recursion. We also present the
$O(n \log n)$ algorithm and a variation of it for context and for sake
of comparison since we compare our algorithm with it numerically.

\subsection{Quadratic Algorithms for Small
  Problems}\label{sec:quadratic-algs}

We first present a family of quadratic algorithms for the
multiplication and inversion of $\m{P}$ and $\m{Q}$ (limiting our
attention to $\m{Q}$). Although the factorizations are not novel, we
present the algorithms here in a more general and complete way than
has appeared in the literature.

Each algorithm is based on one of the following bidiagonal matrix
factorizations:
\begin{theorem}\label{theorem:E-factorization}
  With $n$ fixed, let $k$ be such that $1 \leq k < n$, and define
  $\m{E}_k^{(\delta)} \in \R^{n \times n}$ and
  $\m{F}_k^{(\delta)} \in \R^{n \times n}$ by:
  \begin{displaymath}
    \m{E}_k^{(\delta)} = \begin{bmatrix}
      \m{I}_{k-1} & \\
      & \begin{pmatrix}
        1 & & & \\
        \delta & \delta & & \\
        & \ddots & \ddots & \\
        & & \delta & \delta
      \end{pmatrix}
    \end{bmatrix}, \qquad \m{F}_k^{(\delta)} = \begin{bmatrix}
      \m{I}_{k-1} & \\
      & \begin{pmatrix}
        1 & & & \\
        1 & \delta & & \\
        & \ddots & \ddots & \\
        & & 1 & \delta
      \end{pmatrix}
    \end{bmatrix}.
  \end{displaymath}
  Then, the following hold:
  \begin{displaymath}
    \m{D}^{(\delta)} \m{P} = \m{E}_{n-1}^{(\delta)} \m{E}_{n-2}^{(\delta)} \cdots \m{E}_2^{(\delta)} \m{E}_1^{(\delta)}, \qquad \m{P} \m{D}^{(\delta)} = \m{F}_{n-1}^{(\delta)} \m{F}_{n-2}^{(\delta)} \cdots \m{F}_2^{(\delta)} \m{F}_1^{(\delta)}.
  \end{displaymath}
\end{theorem}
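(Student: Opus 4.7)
My plan is to prove both identities by induction on the number of bidiagonal factors, using a strengthened invariant that tracks the ``finalized'' rows of the partial product separately from the rows still being updated. I will handle the $\m{E}$ identity; the $\m{F}$ identity follows by an essentially identical induction.

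Setting $\m{A}^{(0)} = \m{I}_n$ and $\m{A}^{(k)} = \m{E}_k^{(\delta)} \m{A}^{(k-1)}$, the block structure of $\m{E}_k^{(\delta)}$ shows that it fixes rows $0, \dots, k-1$ of any matrix and sends each subsequent row $i$ to $\delta$ times the sum of rows $i-1$ and $i$. The invariant I propose to prove by induction on $k$ is
\begin{displaymath}
  (\m{A}^{(k)})_{ij} = \begin{cases} \delta^i {i \choose j}, & 0 \leq i \leq k, \\ \delta^k {k \choose j - (i - k)}, & k < i \leq n-1, \end{cases}
\end{displaymath}
with the convention that ${a \choose b} = 0$ whenever $b \notin \{0, \dots, a\}$. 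At $k = n-1$ every row lies in the first branch, giving $(\m{A}^{(n-1)})_{ij} = \delta^i {i \choose j} = (\m{D}^{(\delta)} \m{P})_{ij}$.

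The inductive step splits into three cases. Rows $i \leq k-1$ pass through unchanged and are already finalized by hypothesis. The transitional row $i = k$ combines a finalized row with an intermediate row of shift $1$, yielding $\delta(\delta^{k-1} {k-1 \choose j} + \delta^{k-1} {k-1 \choose j-1}) = \delta^k {k \choose j}$ by one application of Pascal's identity. For $i > k$, both contributing rows are intermediate at step $k-1$ with shift parameters $i-k$ and $i-k+1$ respectively, and Pascal's identity applied to ${k-1 \choose m} + {k-1 \choose m-1}$ with $m = j - (i-k)$ collapses the update to $\delta^k {k \choose j - (i-k)}$, preserving the invariant.

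The only real obstacle is the index bookkeeping in that last case---one must check that the two intermediate shift parameters differ by exactly one so that Pascal's identity lines up---and this is precisely what the form of the second branch encodes. For the $\m{F}$ identity, $\m{F}_k^{(\delta)}$ instead replaces row $i \geq k$ with row $i-1$ plus $\delta$ times row $i$; the target is $(\m{P} \m{D}^{(\delta)})_{ij} = \delta^j {i \choose j}$, and the analogous invariant
\begin{displaymath}
  (\m{B}^{(k)})_{ij} = \begin{cases} \delta^j {i \choose j}, & 0 \leq i \leq k, \\ \delta^{j-(i-k)} {k \choose j-(i-k)}, & k < i \leq n-1, \end{cases}
\end{displaymath}
is verified by the same three-case Pascal argument.
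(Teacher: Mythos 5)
Your proof is correct: I checked the two-branch invariant for both the $\m{E}$ and $\m{F}$ products, and the base case, the pass-through rows, the transitional row $i=k$, and the shifted rows $i>k$ all work out, with Pascal's identity applying exactly as you claim under the convention $\binom{a}{b}=0$ for $b\notin\{0,\dots,a\}$. The paper does not actually carry out this argument---it only remarks that both factorizations "can be proved straightforwardly by induction" and cites the literature---so your write-up is a complete instantiation of precisely the approach the paper gestures at.
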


\begin{proof}
  Both of these factorizations can be proved straightforwardly by
  induction. Proofs can be found in the
  literature~\cite{edelman-strang:pascal-matrices,
    bezerra-sacht:computing-bezier-curves}.
\end{proof}

Our goal is to compute matrix-vector products involving
$\m{Q}, \m{Q}^\top, \m{Q}^{-1}$, and $\m{Q}^{-\top}$, and
\cref{theorem:E-factorization} immediately yields a pair of quadratic
algorithms for each matrix. To see this, we note that the matrices
$\m{E}_k$ and $\m{F}_k$ can be multiplied in $O(k)$ time, and linear
systems involving these matrices can be solved in $O(k)$ time using
forward substitution. Hence, $\m{Q}$ can be multiplied directly in
$O(n^2)$ time by multiplying $n - 1$ bidiagonal matrices. At the same
time, we can rewrite $\m{Q}$ as:
\begin{displaymath}
  \m{Q} = \m{D}^{(1/2)} \m{P} = \squareb{\parens{\m{D}^{(1/2)} \m{P}}^{-1}}^{-1} = \parens{\m{W}\m{P}\m{D}^{(2)}\m{W}}^{-1} = \m{W} {(\m{F}_{n-1}^{(2)})}^{-1} \cdots{} {(\m{F}_1^{(2)})}^{-1} \m{W},
\end{displaymath}
allowing us to solve $n - 1$ systems in $O(n^2)$ time to compute the
same product. We present each type of matrix decomposition for
$\m{Q}, \m{Q}^\top, \m{Q}^{-1}$, and $\m{Q}^{-\top}$ in
\cref{table:Q-factorizations} to illustrate this idea. We note that
different decompositions of the same matrix do \emph{not} necessarily
result in the same algorithm once implemented. The different
algorithms may have difference performance characteristics which are
sensitive to differences in computer architecture (see
\cref{fig:ratios}).

\begin{table}
  \centering
  \begin{tabular}{c|c|c|c|c}
    Method & $\m{Q}$ & $\m{Q}^\top$ & $\m{Q}^{-1}$ & $\m{Q}^{-\top}$ \\
    \midrule
    Multiply & $\m{E}_{n-1} \cdots \m{E}_1$ & $\m{E}_1^\top \cdots \m{E}_{n-1}^\top$ & $\m{W} \m{F}_{n-1} \cdots \m{F}_1 \m{W}$ & $\m{W} \m{F}_1^\top \cdots \m{F}_{n-1}^\top \m{W}$ \\
    Solve & $\m{W} \m{F}^{-1}_1 \cdots \m{F}^{-1}_{n-1} \m{W}$ & $\m{W} \m{F}^{-\top}_{n-1} \cdots \m{F}^{-\top}_1 \m{W}$ & $\m{E}_1^{-1} \cdots \m{E}_{n-1}^{-1}$ & $\m{E}_{n-1}^{-\top} \cdots \m{E}_1^{-\top}$
  \end{tabular}
  \vspace{1em}
  \caption{Matrix factorizations leading to $O(n^2)$ in-place
    algorithms for multiplying $\m{Q}$ and its variations (we take
    $\m{E}_k = \m{E}_k^{(1/2)}$ and
    $\m{F}_k = \m{F}_k^{(2)}$).}\label{table:Q-factorizations}
\end{table}

Note that each algorithm derived from the factorizations in
\cref{table:Q-factorizations} can made to run using $O(1)$ extra space
beyond the $O(n)$ space required by the input and output vectors. In
fact, each algorithm can be made to operate in-place on the input
vector. This is valuable, since in-place algorithms exhibit greater
cache coherency and typically run faster than their out-of-place
equivalents on modern computer architectures.

To fix ideas, we present the in-place multiplication-based algorithm
for computing $\m{x} \gets \m{Q}^{-\top} \m{x}$ below. The algorithm
requires $O(n^2)$ operations overall, using $2 \floor{n/2}$ bit flips
to multiply each matrix $\m{W}$, $n(n-1)/2$ floating point adds, and
$n(n-1)/2$ floating point multiplies total. The seven other algorithms
that can be derived from \cref{table:Q-factorizations} are similar,
and the eight algorithms that deal with $\m{P}$ are simpler still, and
even more efficient.

\begin{algorithm}[H]
  \caption{Compute $\m{Q}^{-\top} \m{x}$ in place using the
    factorization
    $\m{Q}^{-\top} = \m{W} \m{F}_1^\top \cdots \m{F}_{n-1}^\top
    \m{W}$.}\label{algorithm:qit-quad-alg}
  \begin{algorithmic}
    \REQUIRE{} $\m{x} \in \R^n$
    \ENSURE{} $\m{x} \gets \m{Q}^{-\top} \m{x}$
    \STATE{} $i_{\operatorname{max}} \gets 2 \floor{\tfrac{n}{2} - 1} + 1$
    \FOR{$i = 1, 3, \hdots, i_{\operatorname{max}}$}
      \STATE{} $x_i \gets -x_i$
    \ENDFOR{}
    \FOR{$i = n - 2, n - 3, \hdots, 0$}
      \FOR{$j = i, i + 1, \hdots, n - 2$}
        \STATE{} $x_j \gets x_j + x_{j+1}$
        \STATE{} $x_{j+1} \gets 2 x_{j+1}$
      \ENDFOR{}
    \ENDFOR{}
    \FOR{$i = 1, 3, \hdots, i_{\operatorname{max}}$}
      \STATE{} $x_i \gets -x_i$
    \ENDFOR{}
  \end{algorithmic}
\end{algorithm}

\begin{figure}
  \begin{tikzpicture}
    \begin{axis}[
        mark options={solid},
        width=0.8\linewidth,
        height=0.5\linewidth,
        legend style={
          cells={anchor=east},
          legend pos=outer north east
        },
        xlabel=$n$,
        ylabel=$\hat{t}_{\operatorname{direct}}/\hat{t}_{\operatorname{solve}}$]
      \addplot[style=solid, mark=triangle] table[x=n, y=normal] {ratios.dat};
      \addlegendentry{$\m{Q}$}
      \addplot[style=dashed, mark=diamond] table[x=n, y=transpose] {ratios.dat};
      \addlegendentry{$\m{Q}^\top$}
      \addplot[style=densely dotted, mark=square] table[x=n, y=inverse] {ratios.dat};
      \addlegendentry{$\m{Q}^{-1}$}
      \addplot[style=dotted, mark=Mercedes star] table[x=n, y=invtrans] {ratios.dat};
      \addlegendentry{$\m{Q}^{-\top}$}
    \end{axis}
  \end{tikzpicture}
  \caption{Ratios of runtimes for algorithms for multiplying
    $\m{Q}, \m{Q}^\top, \m{Q}^{-1}$, and $\m{Q}^{-\top}$ based on the
    factorizations for problem sizes $n = 16, 32, 48, \hdots, 640$ in
    \cref{table:Q-factorizations}. The graphs show the ratio
    $\hat{t}_{\operatorname{direct}}/\hat{t}_{\operatorname{solve}}$
    of the time taken using the direct method (the first row of
    \cref{table:Q-factorizations}) to the ``solve'' method (the second
    row). Times used are the minimum over several runs. From the plot,
    it is clear that the choice of factorization has an impact on
    performance; the corresponding effect on numerical stability is
    neglible and hence not included.}\label{fig:ratios}
\end{figure}
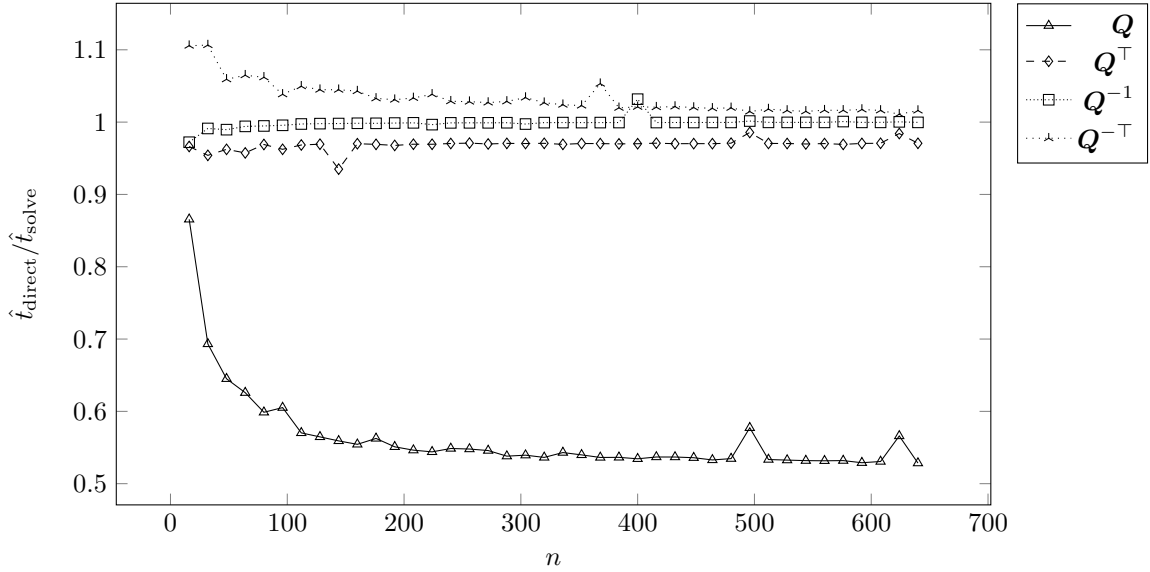

\subsection{Algorithms Based on Toeplitz Factorizations}\label{sec:toep-algs}

It was observed that the matrix $\m{P}$ could be factorized into a
product of diagonal matrices and a Toeplitz
matrix~\cite{wang-linzhang:fast-algorithm-for-solving-linear-systems-of-pascal-type,
  wang-zhou:fast-eigenvalue-algorithm}. Our goal here is to show how
our algorithm relates to De Casteljau's algorithm, suggesting natural
overtures between the two. Since any Toeplitz matrix can be
periodically extended to a circulant matrix, the product of an
$n \times n$ Toeplitz matrix and a vector can be computed in
$O(n \log n)$ time using the fast Fourier transform (FFT). An
immediate problem with this approach is that the product is of size
$2n$. What's more, the algorithm is extremely unstable due to the
particular form the factorization takes. Different attempts have been
made to address the issue of stability, but these attempts have only
yielded slight improvement, and are effectively
unusable~\cite{wang-zhou:fast-eigenvalue-algorithm,
  tang:fast-algorithms-pascal-matrices}. We collect these results here
since we will compare our algorithms with them numerically.

The original
factorization~\cite{wang-linzhang:fast-algorithm-for-solving-linear-systems-of-pascal-type}
results in a matrix-vector product that is so unstable that it is
unusable for any but the smallest problem sizes. A cleverly modified
factorization that is slightly less unstable, but still not usable, is
also presented~\cite{wang-zhou:fast-eigenvalue-algorithm}. We present
this modification here, since it has seen the most use in the
literature. The idea is to introduce a parameter $\alpha > 0$ and
modify the factorization in a way that allows us to minimize the
relative magnitudes of the largest and smallest entries of the
Toeplitz matrix in the factorization. In what follows, the original
factorization can be obtained by taking $\alpha = 1$.

For the vector $\m{a} = {(a_i)}_{i=0}^{n-1}$, we employ the following
notation for lower-triangular Toeplitz matrices:
\begin{displaymath}
  \toeplitz(\m{a}) = \begin{pmatrix}
    a_0 & & & \\
    a_1 & a_0 & & \\
    \vdots & \vdots & \ddots & \\
    a_{n-1} & a_{n-2} & \cdots & a_0
  \end{pmatrix},
\end{displaymath}
and for a fixed scalar $\alpha > 0$, we define:
\begin{displaymath}
  \m{f}_n^{(\alpha)} = {(\alpha^k/k!)}_{k=0}^{n-1} \in \R^n, \qquad \m{\Lambda}_n^{(\alpha)} = \diag{(\m{f}_n^{(\alpha)})}, \qquad \m{T}_n^{(\alpha)} = \toeplitz{(\m{f}_n^{(\alpha)})}.
\end{displaymath}
With these definitions, we have the following factorization of
$\m{P}_n$.

\begin{theorem}\label{theorem:Toeplitz-factorization}
  For $\alpha > 0$, the lower triangular Pascal matrix $\m{P}_n$
  satisfies
  $\m{P}_n = {(\m{\Lambda}_n^{(\alpha)})}^{-1} \m{T}_n^{(\alpha)}
  \m{\Lambda}_n^{(\alpha)}$.
\end{theorem}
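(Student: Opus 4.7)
The plan is to verify the identity entrywise by a direct computation. All three matrices on the right-hand side are lower triangular (the outer factors by virtue of being diagonal, the middle one by construction), so their product is lower triangular and vanishes above the diagonal, matching $\m{P}_n$. It therefore suffices to check entries $(i,j)$ with $i \geq j$.

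Pre-multiplying by $(\m{\Lambda}_n^{(\alpha)})^{-1}$ rescales row $i$ of $\m{T}_n^{(\alpha)}$ by $i!/\alpha^i$, and post-multiplying by $\m{\Lambda}_n^{(\alpha)}$ rescales column $j$ by $\alpha^j/j!$. Since the $(i,j)$ entry of $\m{T}_n^{(\alpha)}$ is $\alpha^{i-j}/(i-j)!$ (using the Toeplitz structure generated by $\m{f}_n^{(\alpha)}$), one obtains
\begin{displaymath}
  \bigl((\m{\Lambda}_n^{(\alpha)})^{-1} \m{T}_n^{(\alpha)} \m{\Lambda}_n^{(\alpha)}\bigr)_{ij}
  = \frac{i!}{\alpha^i} \cdot \frac{\alpha^{i-j}}{(i-j)!} \cdot \frac{\alpha^j}{j!}
  = \frac{i!}{j!\,(i-j)!} = \binom{i}{j},
\end{displaymath}
which is exactly $(\m{P}_n)_{ij}$.

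There is no genuine obstacle here; the content of the theorem is essentially the observation that the specific generator $\m{f}_n^{(\alpha)} = (\alpha^k/k!)$ is engineered precisely so that the powers of $\alpha$ cancel under diagonal conjugation and the factorials of $\m{T}_n^{(\alpha)}$ combine with those of $\m{\Lambda}_n^{(\alpha)}$ to produce binomial coefficients. The parameter $\alpha$ drops out of the final entries, confirming that it is a free scaling parameter that can be tuned to balance the magnitudes of entries of $\m{T}_n^{(\alpha)}$ (as exploited for stability considerations) without altering the factorization's correctness.
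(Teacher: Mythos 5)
Your verification is correct: with $(\m{\Lambda}_n^{(\alpha)})_{ii} = \alpha^i/i!$ and $(\m{T}_n^{(\alpha)})_{ij} = \alpha^{i-j}/(i-j)!$ for $i \geq j$, the diagonal conjugation gives $\tfrac{i!}{\alpha^i}\cdot\tfrac{\alpha^{i-j}}{(i-j)!}\cdot\tfrac{\alpha^j}{j!} = {i \choose j}$, the powers of $\alpha$ cancelling exactly as you say, and the strictly upper-triangular entries vanish on both sides. The paper itself offers no argument for this theorem---it simply defers to the reference of Wang and Zhou---so your entrywise computation is not so much a different route as the only complete proof on the table; it makes explicit that the factorization is nothing more than a diagonal rescaling of the identity ${i \choose j} = \tfrac{i!}{j!(i-j)!}$, and your closing remark that $\alpha$ drops out of the product (so it is a free parameter usable only for balancing the magnitudes inside $\m{T}_n^{(\alpha)}$) correctly captures why the paper introduces $\alpha$ at all. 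The only nicety worth adding is a half-sentence noting that the $(i,j)$ entry of $\toeplitz(\m{a})$ is $a_{i-j}$ by definition of the lower-triangular Toeplitz operator, which is the one structural fact your computation leans on.
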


\begin{proof}
  A proof of this decomposition can be found elsewhere in the
  literature~\cite{wang-zhou:fast-eigenvalue-algorithm}.
\end{proof}

Next, let $\m{X}_n \in \C^{n \times n}$ denote a discrete Fourier
transform (DFT) matrix (the exact form is not important). Since any
Toeplitz matrix can be periodically extended to a circulant matrix,
the preceding decomposition is equivalent to:
\begin{equation}\label{eq:Toeplitz-factorization}
  \m{P}_n = \begin{pmatrix}
    {(\m{\Lambda}_n^{(\alpha)})}^{-1} \\ \m{0}_{n \times n}
  \end{pmatrix}^\top \m{X}_{2n}^{-1} \diag \parens{\m{X}_{2n} \begin{pmatrix}
      \m{f}_n^{(\alpha)} \\ \m{0}_{n \times 1} \end{pmatrix}}
  \m{X}_{2n} \begin{pmatrix} \m{\Lambda}_n^{(\alpha)} \\ \m{0}_{n
      \times n}
  \end{pmatrix}
\end{equation}
by the convolution theorem (i.e., $\m{X}_n$ diagonalizes circulant
matrices). The parameter $\alpha$ depends on $n$ and is chosen in
order to minimize the relative magnitude of the largest and smallest
entries in the factorization. By setting $\alpha$ equal to:
\begin{displaymath}
  \alpha_{\operatorname{opt}} = \argg \min_{1 \leq \alpha < n - 1} \max\parens{\frac{\alpha^\alpha}{\alpha!}, \frac{\alpha^\alpha (n-1)!}{\alpha^{n-1} \alpha!}},
\end{displaymath}
an approximate minimizer is obtained
~\cite{wang-zhou:fast-eigenvalue-algorithm}. This parameter can then
be computed numerically and cached. The authors also provide a
heuristic for an initial guess for $\alpha$ when computing $\alpha$
numerically:
\begin{displaymath}
  \alpha_{\operatorname{opt}} \approxeq \frac{n - 1}{e}.
\end{displaymath}
Using the fast Fourier transform (FFT), equation
\cref{eq:Toeplitz-factorization} gives a straightforward $O(n \log n)$
algorithm for multiplying $\m{P}$, $\m{Q}$, or any of the other
variants discussed earlier. Unfortunately, as we will see, this
algorithm is unstable.


\section{Recursive Algorithms}

In this section, we introduce a novel family of serial algorithms
which are both stable and efficient. We provide a multiway recursive
factorization of $\m{Q}$ which is the main theorem of this paper and
which enables the development of our algorithms.

\begin{definition}
  Let $\m{b}_0 = 1$, $\m{b}_1 = (1/2, 1/2)$, and inductively define
  $\m{b}_n = \m{b}_{n-1} * \m{b}_1$ for $n > 1$. The vectors
  $\m{b}_n \in \R^{n+1}$ are sometimes referred to as the impulse
  response of the \emph{($\ell_1$-normalized) binomial filters}. These
  are the filters whose nonzero coefficients are just the $n$th
  binomial coefficients, scaled so that they sum to unity: i.e.,
  ${(\m{b}_n)}_k = 2^{-n} {n \choose k}$.
\end{definition}

To establish our matrix factorization, we first define a particular
convolution matrix from the binomial kernels. Fixing $m < n$, we
define the matrix $\m{B}_{n,m} \in \R^{n - m \times n}$ by:
\begin{displaymath}
  \m{B}_{n,m} = \begin{pmatrix}
    {(\m{b}_m)}_{0} & {(\m{b}_m)}_{1} & \cdots{} & {(\m{b}_m)}_{m} & & \\
    & \ddots & \ddots & & \ddots & \\
    & & {(\m{b}_m)}_{0} & {(\m{b}_m)}_{1} & \cdots{} & {(\m{b}_m)}_{m}
  \end{pmatrix}.
\end{displaymath}
This matrix and its transpose can be conceived of in terms of simple
convolution operations. For example, for a vector $\m{x} \in \R^n$,
the product $\m{B}_{n,m}\m{x}$ is equivalent to the function call
\texttt{conv(x, b, 'valid')} in the MATLAB programming language. Along
the same lines, if $\m{x} \in \R^m$, then $\m{B}_{n,m}^\top \m{x}$ can
be computed by \texttt{conv(b, x)}.

\begin{lemma}\label{lemma:L-factorization-theorem}
  With $n \in \N$ fixed, let $k \in \N$ and let
  $n = n_1 + \cdots + n_k$ be an integer partition of $n$. Define
  $c_0 = 0$ and $c_i = n_1 + \cdots + n_{i}$ for $i > 0$. Let
  $\m{L}_i \in \R^{n \times n}$ be defined by:
  \begin{displaymath}
    \m{L}_i = \begin{bmatrix}
      \m{I}_{c_{i-1}} & \m{0}_{c_{i-1} \times n - c_{i-1}} \\
      \m{0}_{n - c_{i-1} \times c_{i-1}} & \begin{pmatrix}
        \m{Q}_{n_i} & \m{0}_{n_i \times n - c_{i}}  \\
        \midrule
        \multicolumn{2}{c}{\m{B}_{n - c_{i-1}, n_i}}
      \end{pmatrix}
    \end{bmatrix}.
  \end{displaymath}
  Then,
  $\m{Q}_n = \m{L}_k \m{L}_{k-1} \cdots \m{L}_2 \m{L}_1$.
\end{lemma}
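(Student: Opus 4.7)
The plan is to proceed by induction on $k$, the number of blocks in the partition. The base case $k=1$ forces $n_1 = n$, so $\m{B}_{n-c_0,n_1}$ is a $0 \times n$ block (i.e.\ absent) and the definition collapses to $\m{L}_1 = \m{Q}_n$.

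The key structural observation for the inductive step is that for every $i \geq 2$ we have $c_{i-1} \geq n_1$, so each $\m{L}_i$ admits a block-diagonal decomposition
\[
\m{L}_i = \begin{bmatrix} \m{I}_{n_1} & \m{0} \\ \m{0} & \tilde{\m{L}}_{i-1} \end{bmatrix},
\]
in which $\tilde{\m{L}}_{i-1} \in \R^{(n-n_1) \times (n-n_1)}$ is precisely the matrix built by the same recipe for the induced partition $n - n_1 = n_2 + \cdots + n_k$ of length $k-1$. A short check of cumulative-sum arithmetic confirms that the identity and $\m{Q},\m{B}$ blocks line up correctly: if $\tilde{c}_j$ denotes the cumulative sums of the induced partition, then $n_1 + \tilde{c}_{i-2} = c_{i-1}$, $\tilde{n}_{i-1} = n_i$, and $(n-n_1) - \tilde{c}_{i-2} = n - c_{i-1}$. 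Applying the inductive hypothesis to the smaller problem then yields
\[
\m{L}_k \m{L}_{k-1} \cdots \m{L}_2 = \begin{bmatrix} \m{I}_{n_1} & \m{0} \\ \m{0} & \m{Q}_{n-n_1} \end{bmatrix},
\]
which reduces the lemma to the single identity
\[
\m{Q}_n = \begin{bmatrix} \m{I}_{n_1} & \m{0} \\ \m{0} & \m{Q}_{n - n_1} \end{bmatrix} \m{L}_1.
\]

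The top $n_1$ rows of this identity are immediate: row $r$ of $\m{Q}_n$ for $r < n_1$ is supported on columns $0, \ldots, r$ and coincides with the corresponding row of $\m{Q}_{n_1}$ padded with zeros. For the bottom $n - n_1$ rows I would equate the $(n_1+i,\, j)$ entry of $\m{Q}_n$ with the $(i,j)$ entry of $\m{Q}_{n-n_1}\,\m{B}_{n,n_1}$; pulling out the power-of-two normalizations, this boils down to the Vandermonde convolution identity $\sum_{l} \binom{i}{l}\binom{n_1}{j-l} = \binom{i + n_1}{j}$.

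The only real obstacle is the bookkeeping in the inductive step---keeping track of the shift by $n_1$ through the cumulative sums $c_i$ of the outer partition and the $\tilde{c}_i$ of the induced inner one. Once the blocks are aligned, the content of the lemma collapses to the elementary fact that convolving two $\ell_1$-normalized binomial kernels yields another binomial kernel, i.e.\ Vandermonde's identity.
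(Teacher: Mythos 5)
Your proof is correct and follows essentially the same route as the paper: the paper likewise reduces the general statement to the $k=2$ factorization and obtains the result by recursive application to the induced partition $n - n_1 = n_2 + \cdots + n_k$. The only difference is that you supply the $k=2$ case explicitly (via the block-diagonal alignment of $\m{L}_2, \hdots, \m{L}_k$ and Vandermonde's convolution identity), whereas the paper cites it from the literature.
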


\begin{proof}
  The proof in the case of $k = 2$ is available in the literature and
  is straightforward~\cite{bezerra-sacht:computing-bezier-curves}. The
  result follows from recursively applying the $k = 2$ case to the
  general partition $n = n_1 + \cdots + n_k$.
\end{proof}

Although we have presented \cref{lemma:L-factorization-theorem} in
terms of the $\ell_\infty$-normalized Pascal matrix $\m{Q}_n$, it
applies to the unnormalized Pascal matrix $\m{P}_n$ as well. The
corresponding theorem is obtained by defining the matrix $\m{B}_{n,m}$
in terms of the unnormalized binomial filter kernels $2^n \m{b}_n$ and
defining $\m{L}_1, \hdots, \m{L}_k$ using $\m{P}$ instead of
$\m{Q}$. Also, we note that if we take $k = n - 1$ and $n_i = 1$ for
$i = 1, \hdots, k$, we recover \cref{theorem:E-factorization} as a
special case of \cref{lemma:L-factorization-theorem}.

\begin{example}\label{example:Q12}
  Since \cref{lemma:L-factorization-theorem} is a bit opaque, consider
  the case where $k = 4$ and $(n_1, n_2, n_3, n_4) = (3, 2, 3,
  2)$. Then $(c_0, c_1, c_2, c_3, c_4) = (0, 3, 5, 8, 10)$, and the
  decomposition $\m{Q}_{10} = \m{L}_4\m{L}_3\m{L}_2\m{L}_1$ can be
  schematically depicted as follows:
  \begin{center}
    \vspace{1em}
    \includegraphics{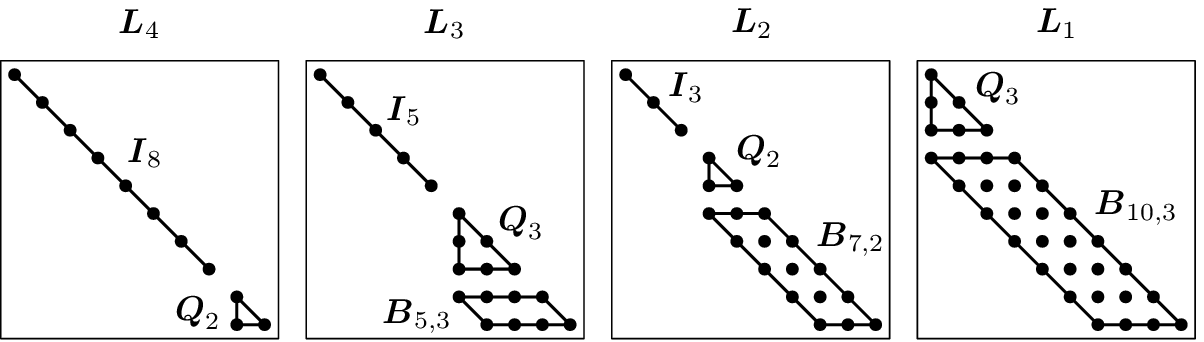}
  \end{center}
\end{example}

If we carry out the block multiplications in
$\m{L}_k \cdots \m{L}_1 = \m{Q}_n$, we obtain a block recursion for
the product $\m{Q}_n\m{x}$.

\begin{theorem}[Main theorem]\label{theorem:main-theorem}
  Let $\m{x} \in \R^n$, let $k \in \N$, and let
  $n = n_1 + \cdots + n_k$ be a partition of $n$. Define
  $c_0, c_1, \hdots, c_k$ as before. Then, the matrix-vector product
  $\m{Q}_n \m{x}$ satisfies:
  \begin{displaymath}
    \m{Q}_n \m{x} = \begin{pmatrix}
      \m{Q}_{n_1} \m{x}_{1:c_1} \\
      \m{Q}_{n_2} \m{B}_{c_2,c_1} \m{x}_{1:c_2} \\
      \vdots \\
      \m{Q}_{n_{k-1}} \m{B}_{c_{k-1},c_{k-2}} \m{x}_{1:c_{k-1}} \\
      \m{Q}_{n_{k}} \m{B}_{c_{k},c_{k-1}} \m{x}_{1:c_{k}}
    \end{pmatrix},
  \end{displaymath}
  where $\m{x}_{s:t} = (x_s, x_{s+1}, \hdots, x_{t-1}, x_t)$.
\end{theorem}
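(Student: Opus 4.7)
The plan is to apply \cref{lemma:L-factorization-theorem}, writing $\m{Q}_n \m{x} = \m{L}_k \m{L}_{k-1} \cdots \m{L}_1 \m{x}$, and to track the vector through successive multiplications by the $\m{L}_i$. I would introduce the intermediate vectors $\m{y}^{(i)} = \m{L}_i \m{L}_{i-1} \cdots \m{L}_1 \m{x}$ for $i = 0, \ldots, k$, where $\m{y}^{(0)} = \m{x}$ and $\m{y}^{(k)} = \m{Q}_n \m{x}$, and prove by induction on $i$ the following invariant: the first $c_i$ entries of $\m{y}^{(i)}$ agree with the first $i$ blocks on the right-hand side of the claimed decomposition, while the trailing $n - c_i$ entries are exactly $\m{B}_{n, c_i} \m{x}$ (with the convention $\m{B}_{n,0} = \m{I}_n$, which makes the $i = 0$ base case trivial).

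For the inductive step, I would exploit the block structure of $\m{L}_i$: it acts as the identity on the first $c_{i-1}$ entries, applies $\m{Q}_{n_i}$ to produce the next $n_i$ entries, and applies $\m{B}_{n-c_{i-1}, n_i}$ to the last $n-c_{i-1}$ entries of its input. By the inductive hypothesis the last $n - c_{i-1}$ entries of $\m{y}^{(i-1)}$ are $\m{B}_{n, c_{i-1}} \m{x}$, so two subclaims are needed: (a) the first $n_i$ entries of $\m{B}_{n, c_{i-1}} \m{x}$ depend only on $\m{x}_{1:c_i}$ and in fact equal $\m{B}_{c_i, c_{i-1}} \m{x}_{1:c_i}$, which is immediate from the banded convolutional structure of $\m{B}$; and (b) the composition $\m{B}_{n - c_{i-1}, n_i}\, \m{B}_{n, c_{i-1}}$ equals $\m{B}_{n, c_i}$.

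Identity (b) is the only nonroutine step and therefore constitutes the main obstacle, though it reduces to a standard fact about convolutions. Composing the valid convolution against $\m{b}_{c_{i-1}}$ with the valid convolution against $\m{b}_{n_i}$ equals the valid convolution against $\m{b}_{c_{i-1}} * \m{b}_{n_i}$, and by the very definition of the binomial filters this product is $\m{b}_{c_{i-1} + n_i} = \m{b}_{c_i}$. With (a) and (b) in hand the induction closes, and reading off $\m{y}^{(k)}$, whose tail is empty because $c_k = n$, yields exactly the block decomposition claimed for $\m{Q}_n \m{x}$.
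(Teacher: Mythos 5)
Your proof is correct and follows essentially the same route as the paper's: both arguments are inductions resting on \cref{lemma:L-factorization-theorem} together with the composition identity for the binomial convolution matrices (your step (b), i.e.\ $\m{b}_{c_{i-1}} * \m{b}_{n_i} = \m{b}_{c_i}$), which the paper compresses into the phrase ``multiply the convolution matrices.'' The only difference is organizational---the paper inducts on the number of parts by peeling off the first block via the two-part case of the lemma and relabeling, whereas you sweep left-to-right through the full factorization $\m{L}_k \cdots \m{L}_1$ with an explicit loop invariant---and your version has the minor virtue of spelling out the restriction step (a) that the paper leaves implicit.
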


\begin{proof}
  For $k = 1$, there is nothing to prove: the block matrix reduces to
  $\m{Q}_n \m{x}$. Hence, the theorem holds trivially for $k = 1$ and
  all $n \in \N$. Let $k > 1$, $n = n_1 + \cdots + n_k \in \N$, and
  assume the theorem holds. Let $n'_1 \in \N$ and define
  $n' = n'_1 + n = c'_{k+1}$. Using \cref{lemma:L-factorization-theorem}, for
  $\m{x} \in \R^{n'}$ we have:
  \begin{displaymath}
    \m{Q}_{n'} \m{x} = \begin{pmatrix}
      \m{I}_{n'_1} & \m{0}_{n'_1 \times n} \\ \m{0}_{n \times n'_1} & \m{Q}_n
    \end{pmatrix} \begin{pmatrix}
      \m{Q}_{n'_1} & \m{0}_{n'_1 \times n} \\
      \midrule \multicolumn{2}{c}{\m{B}_{{n'_1} + n, n'_1}}
    \end{pmatrix} \m{x} = \begin{pmatrix}
      \m{Q}_{n'_1} \m{x}_{1:n'_1} \\ \m{Q}_n \m{B}_{{n'_1} + n, n'_1} \m{x}_{1:c'_{k+1}}
    \end{pmatrix}
  \end{displaymath}
  If we expand $\m{Q}_n$ in the second block using the inductive
  hypothesis, multiply the convolution matrices, and relabel
  $n'_2 = n_1, n'_3 = n_2, \hdots$, and likewise for $c_i'$, we have
  the result.
\end{proof}

Since it can be identified with convolution, multiplication by
$\m{B}_{n,m}$ can be done using the fast Fourier transform in
$O(n \log n)$ time. For instance, letting $\m{X}_n$ be an $n \times n$
DFT matrix as in \cref{sec:toep-algs} and letting $\m{C}_{n,m}$
periodically extend $\m{B}_{n,m}$ to an $n \times n$ circulant matrix,
we have:
\begin{displaymath}
  \m{B}_{n,m} = \begin{pmatrix} \m{I}_{n-m} & \m{0}_{n - m \times m} \end{pmatrix} \m{C}_{n,m} = \begin{pmatrix} \m{I}_{n-m} & \m{0}_{n - m \times m} \end{pmatrix} \m{X}_n^* \diag \parens{\parens{\frac{1 + e^{-2\pi{}ij/n}}{2}}^m}_{j=0}^{n-1} \m{X}_n,
\end{displaymath}
where we have used
$\m{b}_m = \m{b}_1^{*m} = \m{b}_1 * \cdots * \m{b}_1$ to compute the
DFT vector coefficients $[(1 + e^{-2\pi{}ij/n})/2]^m$.

Since we can compute $\m{B}_{n,m} \m{x}$ in $O(n \log n)$ time, we can
now state an algorithm for the computation of $\m{Q}_n\m{x}$ which is
efficient and numerically stable. Fixing $N \in \N$, and taking
$k = 2$, $n_1 = \floor{n/2}$, and $n_2 = n - \floor{n/2}$ in
\cref{theorem:main-theorem}, repeated recursive application of
\cref{theorem:main-theorem} yields the following algorithm. We will
say more about the choice of $N$ in a later section.

\begin{algorithm}[H]
  \caption{Compute $\m{Q}_n\m{x}$ serially using a two-way
    recursion}\label{algorithm:serial-two-way-recursion}
  \begin{algorithmic}
    \REQUIRE{} $\m{x} \in \R^n$
    \ENSURE{} $\m{x} \gets \m{Q}_n\m{x}$
    \IF{$n \leq N$}
      \STATE{} Compute $\m{x} \gets \m{Q}_n\m{x}$ using the $O(n^2)$
      algorithm derived from \cref{table:Q-factorizations}
    \ELSE{}
      \STATE{} $m \gets \floor{n/2}$
      \STATE{} Compute $\m{x}_{m+1:n} \gets \m{B}_{n,m} \m{x}$ using the FFT
      \IF{$n - m > 1$}
        \STATE{} Recursively compute $\m{x}_{m+1:n} \gets \m{Q}_{n-m} \m{x}_{m+1:n}$
      \ENDIF{}
      \IF{$m > 1$}
        \STATE{} Recursively compute $\m{x}_{1:m} \gets \m{Q}_{m} \m{x}_{1:m}$
      \ENDIF{}
    \ENDIF{}
  \end{algorithmic}
\end{algorithm}

\begin{theorem}
  The matrix-vector product $\m{Q}_n\m{x}$ can be computed in
  $O(n \log^2 n)$ time in exact arithmetic.
\end{theorem}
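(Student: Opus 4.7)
The plan is to extract and solve the standard divide-and-conquer recurrence satisfied by the running time $T(n)$ of \cref{algorithm:serial-two-way-recursion}. For the base case $n \leq N$ with $N$ a fixed constant, the algorithm invokes a quadratic scheme from \cref{table:Q-factorizations}, which uses $O(n^2) = O(1)$ operations.

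For $n > N$, I would account for three contributions at the top of the recursion: (i) the convolution $\m{x}_{m+1:n} \gets \m{B}_{n,m}\m{x}$ with $m = \lfloor n/2 \rfloor$; (ii) a recursive call on a subvector of length $n - m = \lceil n/2 \rceil$; and (iii) a recursive call on a subvector of length $m$. The convolution is dispatched by the circulant embedding stated just before the algorithm: the DFT symbol $[(1 + e^{-2\pi i j/n})/2]^m$ can be tabulated at the $n$ roots of unity in $O(n \log n)$ time (for instance by repeated squaring in $m$), after which two length-$n$ FFTs cost $O(n \log n)$. This yields the recurrence
\begin{displaymath}
  T(n) \leq T(\lceil n/2 \rceil) + T(\lfloor n/2 \rfloor) + C\, n \log n
\end{displaymath}
for an absolute constant $C$.

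Finally, I would solve this recurrence by the usual recursion-tree argument: the depth is $\log_2 n + O(1)$ since every child subproblem has size at most $\lceil n/2 \rceil < n$ for $n \geq 3$, and a straightforward induction shows that at depth $j$ the sizes of the live subproblems sum to $n$; hence the non-recursive work at level $j$ is bounded by $C\, n \log n$, and summing over the $O(\log n)$ levels gives $T(n) = O(n \log^2 n)$. The only subtlety is bookkeeping around the floor and ceiling in the unequal split, which a routine induction resolves, so the result amounts to a textbook application of the master theorem (case two) with an extra logarithmic factor in the divide step; there is no real obstacle beyond verifying the $O(n \log n)$ cost claimed for the convolution stage.
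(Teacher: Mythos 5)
Your proposal is correct and follows essentially the same route as the paper: both set up the recurrence $T_n = B_n + T_{\lfloor n/2\rfloor} + T_{n - \lfloor n/2\rfloor}$ with $B_n = O(n\log n)$ for the convolution step and solve it to obtain $O(n\log^2 n)$, the paper via the master method and you via the equivalent recursion-tree argument. Your additional justification of the $O(n\log n)$ convolution cost via the circulant embedding is a detail the paper asserts rather than re-derives, but it does not change the substance of the argument.
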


\begin{proof}
  The product $\m{B}_{n,m}\m{x}$ is a convolution and can be computed
  in $O(n \log n)$ time. Let $B_n$ denote the time required to compute
  $\m{B}_{n,m}\m{x}$, so that $B_n = O(n \log n)$. If we let $T_n$
  denote the time required to compute $\m{Q}_n\m{x}$, then
  $T_n = B_n + T_{n-\floor{n/2}} + T_{\floor{n/2}}$. Solving this
  recurrence using the master method gives $T_n = \Theta(n \log^2 n)$,
  hence $T_n = O(n \log^2 n)$~\cite{clrs}.
\end{proof}

We note that if we taken $k > 2$, and if $k = O_n(1)$, then we still
obtain an algorithm which is $O(n \log^2 n)$. In particular, by using
the Akra-Bazzi
theorem~\cite{akra-bazzi:solution-of-linear-recurrence-equations}, we
show that an approximately uniform integer partition of $n$ yields an
algorithm which runs in $\Theta(k n \log^2 n)$ time. In a later
section, we discuss numerical experiments which suggest that this can
be improved upon.

\begin{theorem}
  Consider the modification of
  \cref{algorithm:serial-two-way-recursion} using a $k$-way partition
  of $n$, where $k > 2$. The modified algorithm runs in
  $\Theta(k n \log^2 n)$ time.
\end{theorem}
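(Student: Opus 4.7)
The plan is to write down the recurrence satisfied by the running time of the modified algorithm and then apply the Akra--Bazzi theorem to extract the asymptotic bound.

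First, I would unpack what \cref{theorem:main-theorem} says happens at the top-level call when the $k$-way partition is chosen to be approximately uniform, i.e.\ $n_i \in \{\lfloor n/k \rfloor, \lceil n/k \rceil\}$. The theorem produces $k - 1$ binomial convolutions $\m{B}_{c_i, c_{i-1}} \m{x}_{1:c_i}$ (one for each block $i = 2, \ldots, k$), followed by $k$ independent recursive calls $\m{Q}_{n_i}(\cdot)$. Each such convolution has input length $c_i \leq n$, so by the FFT identity displayed just after \cref{theorem:main-theorem} it costs $\Theta(c_i \log c_i) = O(n \log n)$. Summing over $i = 2, \ldots, k$ and noting that $c_i \geq i n/k$ for a uniform partition, the total non-recursive work at the root is $\Theta(k n \log n)$.

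This yields the recurrence
\begin{displaymath}
T(n) = k\, T\!\left(\frac{n}{k}\right) + \Theta(k n \log n),
\end{displaymath}
with base case $T(n) = O(1)$ for $n \leq N$. I would then apply Akra--Bazzi with a single summand, $a = k$ and $b = 1/k$: the critical exponent $p$ satisfies $k \cdot (1/k)^p = 1$, hence $p = 1$. The driving term $g(u) = \Theta(k u \log u)$ contributes
\begin{displaymath}
\int_1^n \frac{g(u)}{u^{p+1}}\, du = \Theta\!\left(k \int_1^n \frac{\log u}{u}\, du\right) = \Theta(k \log^2 n),
\end{displaymath}
so that $T(n) = \Theta\!\left(n\bigl(1 + k \log^2 n\bigr)\right) = \Theta(k n \log^2 n)$, matching the claim.

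The main subtlety I anticipate is ensuring that the $\Theta$-constants in the recurrence and in the Akra--Bazzi conclusion do not swallow the factor of $k$ we wish to display. Under the hypothesis $k = O_n(1)$ already in force in the surrounding text this is automatic, since $k$ then behaves as a constant relative to $n$ while still appearing explicitly in the driving function. The floor/ceiling discrepancy in the $n_i$ causes no difficulty, as Akra--Bazzi tolerates such small perturbations of the subproblem sizes; as a sanity check one can also perform a direct level-by-level accounting of the FFT costs in the recursion tree of depth $\log_k n$, which reproduces the same bound.
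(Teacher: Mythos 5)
Your proposal is correct and follows essentially the same route as the paper: an approximately uniform $k$-way partition, a per-level convolution cost of $\Theta(k n \log n)$, and an application of the Akra--Bazzi theorem with critical exponent $p = 1$ and the integral $\int_1^n \tfrac{k u \log u}{u^{2}}\, du = \Theta(k \log^2 n)$. The only cosmetic difference is that the paper evaluates $\sum_{i=2}^k B_{in/k}$ exactly via $\tfrac{1}{k}\binom{k+1}{2}\Theta(n \log n)$ where you bound it with $c_i \leq n$ and $c_i \geq in/k$; both give the same $\Theta(k n \log n)$ driving term.
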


\begin{proof}
  Let $n_1 + \cdots + n_k = n$ such that $n_i \in \N$ and either
  $n_i = \floor{n/k}$ or $n_i = \ceil{n/k}$. Let $T_n$ denote the
  runtime of the modified algorithm applied to an input of size
  $n$. We apply the Akra-Bazzi
  theorem~\cite{akra-bazzi:solution-of-linear-recurrence-equations} to
  show that $T_n = \Theta(kn \log^2 n)$. We have that $T_n$ satisfies:
  \begin{displaymath}
    T_n = \sum_{i=2}^k B_{c_i} + \sum_{i=1}^k T_{n_i} = \sum_{i=2}^k B_{in/k} + \sum_{i=1}^k T_{n/k},
  \end{displaymath}
  where we ignore the floors and ceilings (the Akra-Bazzi theorem is
  designed to account for this). Since $B_n = \Theta(n \log n)$:
  \begin{displaymath}
    \sum_{i=2}^k B_{in/k} = \sum_{i=2}^k \Theta\parens{\frac{in}{k} \log \frac{in}{k}} = \frac{1}{k} {k+1 \choose 2} \Theta(n \log n) = \Theta(k n \log n).
  \end{displaymath}
  Applying the Akra-Bazzi theorem involves determing a constant $p$
  from the recursion and then evaluating an integral which gives the
  asymptotic estimate for $T_n$. In our case, $p = 1$, and the
  integral to compute is:
  \begin{displaymath}
    \int_1^n \frac{k u \log u}{u^{p+1}} du = k \int_1^n \frac{\log u}{u} du = \Theta(k \log^2 n).
  \end{displaymath}
  Applying the theorem yields $T_n = \Theta(k n \log^2 n)$.
\end{proof}

There is some question as to whether this estimate for $T_n$ is
pessimistic. Computing the optimal partition at each step can be done
using dynamic programming, but this is far from ideal. In the next
section, we discuss our numerical attempts at doing so. Our results
seem to suggest that for $k > 2$, the uniform partition of $n$ is
suboptimal, and that the optimal partition in fact results in an
algorithm which may perform as well as the two-way recursion using the
uniform partition.

\begin{conjecture}\label{conj:optimal-split}
  Let $n \gg k > 2$ and let
  $\lambda_1, \hdots, \lambda_k \in \mathbb{Q}$ such that
  $\lambda_i > 0$ for each $i$ and
  $\lambda_1 + \cdots + \lambda_k = 1$. Let $n_i = \lambda_{i}n$ for
  each $i$---i.e., let $\lambda_1, \hdots, \lambda_k$ determine a
  partition of $n$. Then, in the above proof, there exists a way of
  choosing $\lambda_1, \hdots, \lambda_k$ so that
  $T_n = \Theta(k n \log^2 n)$.
\end{conjecture}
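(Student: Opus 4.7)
The plan is to prove the existence claim by exhibiting an explicit witness, namely the uniform choice $\lambda_i = 1/k \in \mathbb{Q}$ for $i = 1, \hdots, k$. Each $\lambda_i$ is positive, their sum is $1$, and the induced integer partition $n_i \in \{\lfloor n/k \rfloor, \lceil n/k \rceil\}$ is precisely the one analysed in the preceding theorem. Hence the conjecture follows immediately from that theorem: this particular choice of $\lambda$ yields $T_n = \Theta(k n \log^2 n)$, establishing existence. The only thing requiring verification is that ``approximately uniform with floor/ceiling'' fits into the rational $\lambda$-formulation, and for $n$ a multiple of $k$ it does so exactly; for other $n$, the floor/ceiling perturbation is absorbed by Akra--Bazzi's tolerance term, as in the preceding proof.

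To make the dependence on $\lambda$ transparent and to confirm that the bound is tight rather than merely an upper bound, I would re-run the Akra--Bazzi computation for a general rational partition before specialising. The recurrence $T_n = \sum_{i=2}^k B_{c_i} + \sum_{i=1}^k T_{\lambda_i n}$ has Akra--Bazzi characteristic equation $\sum_{i=1}^k \lambda_i^p = 1$; since $\sum_i \lambda_i = 1$ and each $\lambda_i \in (0,1)$, the map $p \mapsto \sum_i \lambda_i^p$ is strictly decreasing on $p > 0$, so $p = 1$ is the unique solution. Writing $s_i = \lambda_1 + \cdots + \lambda_i$, the driving term becomes $g(n) = \sum_{i=2}^k B_{s_i n} = \Theta(C(\lambda) \, n \log n)$, where $C(\lambda) = \sum_{i=2}^k s_i$ depends only on $\lambda$. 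The Akra--Bazzi integral evaluates to $\int_1^n g(u)/u^2 \, du = \Theta(C(\lambda) \log^2 n)$, yielding $T_n = \Theta(C(\lambda) \, n \log^2 n)$. Specialising to $\lambda_i = 1/k$ gives $s_i = i/k$ and $C(\lambda) = \tfrac{1}{k}\sum_{i=2}^k i = \Theta(k)$, so $T_n = \Theta(k n \log^2 n)$, which witnesses the conjecture.

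The main subtlety --- not really an obstacle --- is the integrality question, i.e.\ that $\lambda_i n$ need not be an integer when $n$ fails to be divisible by the common denominator of the $\lambda_i$. This is precisely the situation Akra--Bazzi is designed to tolerate through its floor/ceiling perturbation term, so the argument goes through essentially verbatim as in the preceding proof. In effect the conjecture reduces to an existence statement whose witness is already constructed in the preceding theorem; the novelty of writing it out in the general $\lambda$-form is that it exhibits a one-parameter family of partitions with controllable constant $C(\lambda)$, which is what the companion numerical discussion exploits when searching for a partition that outperforms the uniform one in practice.
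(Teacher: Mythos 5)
First, a point of order: the statement you were given is labelled a \emph{conjecture} in the paper. The authors supply no proof --- only numerical evidence from regression-fitted cost models --- so there is no proof of theirs to compare yours against, and any correct argument here would be new. The difficulty with your argument is the choice of witness. Taking $\lambda_i = 1/k$ reproduces exactly the approximately uniform partition of the theorem proved immediately above the conjecture, so your proof amounts to the observation that the conjecture is a tautological consequence of that theorem. Read literally, that observation is valid --- but a statement that follows in one line from the preceding theorem would not be posed as a conjecture, and the surrounding prose (``there is some question as to whether this estimate for $T_n$ is pessimistic,'' ``for $k > 2$, the uniform partition of $n$ is suboptimal,'' ``the optimal partition in fact results in an algorithm which may perform as well as the two-way recursion'') makes clear that the intended content is that some \emph{non-uniform} $\lambda$ removes the factor of $k$, i.e.\ achieves $T_n = \Theta(n \log^2 n)$ as in the $k = 2$ case. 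Your witness provably does not do this: by your own computation the uniform partition has $C(\lambda) = \tfrac{1}{k}\sum_{i=2}^{k} i = \Theta(k)$, so it only re-derives the bound already known.

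The general-$\lambda$ Akra--Bazzi computation in your second paragraph is correct and is in fact the right tool for the intended question; you stop one step short of using it. You establish $T_n = \Theta(C(\lambda)\, n \log^2 n)$ with $C(\lambda) = \sum_{i=2}^{k} s_i$ and $s_i = \lambda_1 + \cdots + \lambda_i$. Since the $s_i$ increase to $s_k = 1$, one has $1 \le C(\lambda) \le k - 1$, and the lower end is attained up to constants by skewed partitions: for example $\lambda_i \propto 2^{i}$ gives $s_i = \Theta(2^{i-k})$ and hence $C(\lambda) \le 2$. Specialising your formula to such a $\lambda$ would give $T_n = \Theta(n \log^2 n)$ and would genuinely address the conjecture, modulo one caveat you would have to confront honestly: the constants hidden by Akra--Bazzi depend on $k$ and on $\min_i \lambda_i$, which for a geometric partition is $2^{-\Theta(k)}$, so the resulting estimate is not automatically uniform in $k$. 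That uniformity issue is the plausible reason the authors left the statement as a conjecture, and it is the part your proposal never touches.
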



\subsection{Optimal Recursion}

The $O(n^2)$ algorithm presented in \cref{sec:quadratic-algs} is
extremely fast for small problem sizes. In designing our
$O(n \log^2 n)$ algorithm, this should be borne in mind---if we carry
out the recursion to its maximum depth, we would compute a large
number of small FFTs when using the small, inplace algorithm would be
significantly faster. In this section, we will consider how to find
the optimal threshold $N$ for the case of the uniform partition when
$k = 2$.

We define the following runtime costs:
\begin{subequations}
  \begin{align*}
    T_n &= \mbox{the cost to compute $\m{Q}_{n}\m{x}$ using the $O(n \log^2 n)$ algorithm}, \\
    A_n &= \mbox{the cost to compute $\m{Q}_{n}\m{x}$ using the $O(n^2)$ algorithm}, \\
    B_n &= \mbox{the cost to compute $\m{B}_{n,m}\m{x}$ using the $O(n \log n)$ algorithm},
  \end{align*}
\end{subequations}
where we found the parameter $m$ in computing $\m{B}_{n,m}\m{x}$ to have
little effect on the runtime. With this setup, we can formulate the
optimal cost of our algorithm as:
\begin{equation}\label{eq:dynprog1}
  T_n = \min \parens{A_n, \min{(A_{\floor{n/2}}, T_{\floor{n/2}})} + \min{(A_{n - \floor{n/2}}, T_{n - \floor{n/2}})} + B_n}.
\end{equation}
This cost can be computed in a straightforward manner using dynamic
programming~\cite{clrs}, and the corresponding recursion can be
recovered and used to generate code. Alternatively, we can determine
this tuning parameter empirically, taking advantage of the performance
gains due to autotuning FFT libraries~\cite{fftw}.

Upon solving \cref{eq:dynprog1}, it becomes clear that the rule which
governs the recursion is simply to recurse if $n$ is above a certain
threshold $N$, and to use the $O(n^2)$ algorithm otherwise. On the
computer we ran our numerical tests on, we found $N = 452$. That is,
if $n \leq N$, then the $O(n^2)$ algorithm is faster; and if $n > N$,
then the recursion is faster. This consideration reflects the
recursive base case in \cref{algorithm:serial-two-way-recursion}.

To compute $N$, we simply solve the following minimization problem:
\begin{displaymath}
  N = \min \set{n \geq 1 : A_n > T_n},
\end{displaymath}
where $T_n$ is computed from \cref{eq:dynprog1}. This can be done
using bottom-up dynamic programming extremely easily. Also, we note
that to approximate the costs $A_n$ and $B_n$, we should have
$A_n = a_0 + a_1 n + a_2 n^2$ and $B_n = b_0 + b_1 n + b_2 n \log n$,
for some constants $a_i$ and $b_i$, with $i = 0, 1, 2$. These
constants can be computed empirically by estimating the CPU time of
each algorithm and solving the corresponding regression problem. This
is the approach we used to estimate $N$ in our tests.

\begin{note}
  There are two variations on \cref{eq:dynprog1}. First, observe
  that the partition of $n$ in the above was fixed as
  $n = \floor{n/2} + (n - \floor{n/2})$. This restriction can be
  removed by considering the following dynamic programming problem:
  \begin{equation}\label{eq:dynprog2}
    T_n = \min_{m \geq 1} \parens{A_n, \min{(A_{m}, T_{m})} + \min{(A_{n - m}, T_{n - m})} + B_n}.
  \end{equation}
  More generally, we could consider partitions of $n$ with more than
  two terms. These dynamic programs can be solved, but take much more
  time to do so, especially the latter. We observed two things when
  conducting these experiments:
  \begin{itemize}
  \item When solving \cref{eq:dynprog2}, we found that the
    recovered recursions were very similar to those recovered when
    solving \cref{eq:dynprog1}. Namely, for $n \gg N$, we recover
    $m = \floor{n/2}$; however, for $n$ just slightly greater than
    $N$, a ``soft transition'' is introduced with $m \neq \floor{n/2}$
    (but with $|m - \floor{n/2}|$ not too large). This makes intuitive
    sense, but incorporating this difference had no noticeable effect
    on CPU time.
  \item For partitions with more than two terms, the optimal split is
    \emph{not} approximately uniform. What's more, the cost of a
    partition with more terms appears to be at least as great as the
    cost of a partition with fewer terms; interestingly (see
    \cref{conj:optimal-split}), at least for the first few $k$, the
    cost obtained using the optimal partition is essentially the same
    for each $k$ when simulating results using cost functions that
    were estimated using regression.
  \end{itemize}
\end{note}


\section{Numerical Experiments}\label{sec:numerical-experiments}

We conducted numerical experiments to assess the performance of our
algorithms and compare them with preexisting methods---namely, the
algorithms presented in \cref{sec:quadratic-algs} and
\cref{sec:toep-algs}. Our numerical stability results and timing
comparisons are presented in \cref{fig:timings-and-stability}. All of
our comparisons were performed on a computer with two dual-core 2.2
GHz Intel Core i7 CPUs with 256KB of L2 cache per core and 4MB of L3
cache. Our algorithms were implemented in C and C++. We used FFTW3 to
compute the fast Fourier transform and the GNU Multiprecision Library
to compute the ground truth results for our stability
tests~\cite{fftw, libgmp}.

First, our tests suggest that all of the Toeplitz algorithms are
essentially useless. The relative error increases extremely rapidly
and they quickly become too unstable to be of any use. In the range
that they provide reasonable results, the quadratic algorithms are
1.5--2 orders of magnitude faster. Given that the quadratic algorithms
only involve simple floating point operations and that the Toeplitz
algorithms require extra setup and memory allocation to call the FFT
routines---\emph{and} since the asymptotic constant for the estimate
of the number of FLOPS is significantly higher than for the quadratic
algorithms---this is unsurprising.

We also tested the performance of using a recursion with $k > 2$. In
particular, we modified \cref{algorithm:serial-two-way-recursion} by
replacing the first step of the recursion with a recursion based on a
uniform partition of $n$ with $k > 2$. We measured the ratio of the
time taken by \cref{algorithm:serial-two-way-recursion} to this
modified algorithm for different $k$ and $n$:
\begin{center}
  \begin{tabular}[h]{ccccc}
    & $n = 2^{13}$ & $2^{14}$ & $2^{15}$ & $2^{16}$ \\
    \hline
    $k = 4$ & 0.9644\ldots & 0.9497\ldots & 1.001\ldots & 0.9635\ldots \\
    6 & 0.8312\ldots & 0.7925\ldots & 0.8078\ldots & 0.8243\ldots \\
    8 & 0.8719\ldots & 0.8535\ldots & 0.9009\ldots & 0.8691\ldots
  \end{tabular}
\end{center}
There are two things to observe:
\begin{itemize}
\item The performance of the resulting algorithms is sensitive to the
  problem size and the partition size---powers of two are better. This
  is because, despite the fact that FFTW uses an $O(n \log n)$
  algorithm for all problem sizes, problem sizes which are highly
  composite are faster to solve.
\item A uniform partition of $n$ does not appear to be optimal for
  $k > 2$.
\end{itemize}
This suggests that by carefully exploring the space of possible
recursions for $k \geq 2$, it may be possible to generate recursions
which are faster than
\cref{algorithm:serial-two-way-recursion}. Another possibility is use
in the design of parallel algorithms: parallel algorithms tend to
perform best when there is little communication among worker threads
and when the granularity of jobs is large. By taking $k$ to be the
number of processors for the first recursion and $k = 2$ afterwards, a
suitable design for a parallel algorithm emerges.

\begin{figure}
  \begin{tikzpicture}
    \begin{groupplot}[
        group style={
          group size=1 by 2,
          x descriptions at=edge bottom,
          y descriptions at=edge left,
          vertical sep=0
        },
        xmin=6e-1,
        xmax=2e5,
        xmode=log,
        ymode=log,
        width=\linewidth,
        xlabel=$n$]
      \nextgroupplot[
          ylabel=\textbf{$||\m{y} - \hat{\m{y}}||_\infty/||\m{y}||_\infty$},
          mark options={solid},
          height=0.4\linewidth,
          legend style={legend pos=north west}]
        \addplot[style=solid, mark=triangle] table[x=N, y=Q_mult_rec] {errors.dat};
        \addlegendentry{Recursive};
        \addplot[style=dashed, mark=diamond] table[x=N, y=Q_mult_small] {errors.dat};
        \addlegendentry{Quadratic};
        \addplot[style=densely dotted, mark=square] table[x=N, y=Q_mult_toep] {errors.dat};
        \addlegendentry{Toeplitz};
      \nextgroupplot[
          ylabel=Time (s.),
          height=0.4\linewidth,
          mark options={solid}]
        \addplot[style=solid, mark=triangle] table[x=N, y=Q_mult_rec] {timings.dat};
        \addplot[style=dashed, mark=diamond] table[x=N, y=Q_mult_small] {timings.dat};
        \addplot[style=densely dotted, mark=square] table[x=N, y=Q_mult_toep] {timings.dat};
    \end{groupplot}
  \end{tikzpicture}
  \caption{Top: uniform relative errors of different methods of
    computing $\m{y} = \m{Q}_n\m{x}$, where
    $\m{x}_i \sim \mathcal{N}(0, 1)$, and for
    $n = 2^0, 2^1, 2^2, \hdots$. Each relative error is the sample
    mean over 10 trials. The ground truth is denoted $\m{y}$, and the
    approximate value is denoted $\hat{\m{y}}$. The ground truth value
    $\m{y}$ was computed using the GMP library using 50 digits of
    precision~\cite{libgmp}. Bottom: timings for different methods of
    multiplying $\m{Q}_n$. Times are the minimum CPU time over 10
    trials. Examining the plots, we can clearly see that the Toeplitz
    algorithm loses all digits of precision before $n = 10^2$; at the
    same time, the quadratic algorithm is faster than the Toeplitz
    algorithm for $n \leq 300$, approximately. We can also see that as
    $n$ approaches $n = 10^5$,
    \cref{algorithm:serial-two-way-recursion} is two orders of
    magnitude faster than the quadratic algorithm, and only one order
    of magnitude slower than the Toeplitz algorithm. For small problem
    sizes, little speed is lost incorprating the quadratic algorithm
    as a recursive base case. A jump in the instability of
    \cref{algorithm:serial-two-way-recursion} is also evident: this
    occurs at $n = N$ and is due to the introduction of the greater
    (but minimal) numerical instability of the
    FFT.}\label{fig:timings-and-stability}
\end{figure}
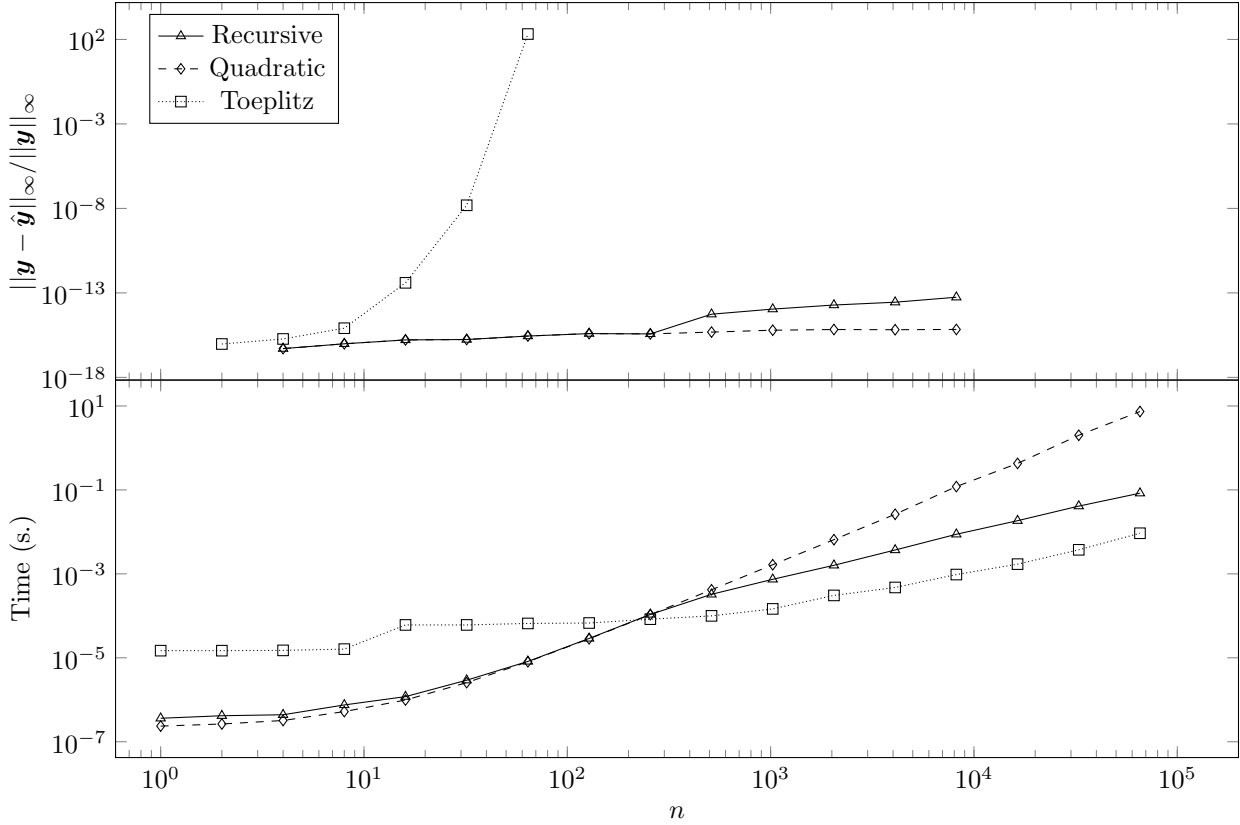


\section{B\'{e}zier Curve Interpolation}

In this section, we consider the problem of B\'{e}zier curve
evaluation, which is readily cast in terms of Pascal matrix
multiplication. B\'{e}zier curves are parametric curves frequently
used in computer graphics and computed-aided modeling which were
invented by Pierre B\'{e}zier~\cite{bezier:numerical-control}. De
Casteljau's algorithm evaluates a B\'{e}zier curve at one point along
the curve in $O(n^2)$ time, where $n$ is the number of control
points. It is also possible to perform this evaluation in $O(n)$ time
using, e.g., Horner's rule and the monomial form of the same
polynomial. Unfortunately, it is well-documented that this approach is
numerically unstable~\cite{farin2002curves}. The algorithm we present
here has been referred to elsewhere as the Bernstein-Fourier algorithm
for B\'{e}zier curve evaluation~\cite{bezerra2013efficient,
  delgado2015accurate} and effectively improves the runtime of De
Casteljau's algorithm to $O(n \log n)$, allowing fast and stable
evaluation of B\'{e}zier curves.

\begin{definition}
  Let $n \in \N$ and fix $t \in \R$ such that $0 \leq t \leq 1$. Then,
  for $i = 0, \hdots, n$, the \emph{$i$th Bernstein polynomial of
    degree $n$} is given by:
  \begin{displaymath}
    b^{(n)}_i(t) = {n \choose i} t^i (1 - t)^{n - i},
  \end{displaymath}
  and the \emph{$n$th Bernstein matrix}
  $\m{\mathcal{B}}^{(t)}_n \in \R^{n+1 \times n+1}$ is the lower
  triangular matrix whose entries are given by:
  \begin{displaymath}
    \parens{\m{\mathcal{B}}^{(t)}_n}_{ij} = b_j^{(i)}(t).
  \end{displaymath}
\end{definition}

Let $d \geq 1$ and $\m{p} \in \R^{n+1 \times d}$ be the matrix
containing $n+1$ control points defining a B\'{e}zier curve
$\gamma : [0, 1] \to \R^d$ as its rows. De Casteljau's algorithm for
B\'{e}zier curve evaluation computes $\gamma(t)$ in $O(dn^2)$
time~\cite{farin2002curves}. The algorithm uses a decomposition which
is materially the same as the decomposition of $\m{Q}_n$ given by
\cref{theorem:E-factorization}. In particular, it computes
$\gamma(t) = \m{e}_{n+1}^\top \m{\mathcal{B}}^{(t)}_n \m{p}$. Hence,
by modifying the algorithms for the multiplication of $\m{Q}_n$
presented above, from this $O(dn^2)$ and $O(dn \log^2 n)$ algorithms
for B\'{e}zier curve evaluation are initially obtained.

We can readily see that for $\delta = 1/2$, we have
$\m{\mathcal{B}}^{(1/2)}_n = \m{D}^{(1/2)}_n \m{P}_n$. Along the lines
of the derivation in \cref{sec:quadratic-algs}, we can define a matrix
similar to $\m{E}_k^{(\delta)}$ which yields a decomposition of
$\m{\mathcal{B}}^{(t)}_n$ into a product of bidiagonal
matrices. Define:
\begin{displaymath}
  \m{G}_k^{(t)} = \begin{bmatrix}
      \m{I}_{k-1} & \\
      & \begin{pmatrix}
        1 & & & \\
        1 - t & t & & \\
        & \ddots & \ddots & \\
        & & 1 - t & t
      \end{pmatrix}
    \end{bmatrix}.
\end{displaymath}
Then
$\m{\mathcal{B}}^{(t)}_n = \m{G}_{n-1}^{(t)} \m{G}_{n-2}^{(t)} \cdots
\m{G}_1^{(t)}$, noting that $\m{G}_k^{(1/2)} = \m{E}_k^{(1/2)}$. This
decomposition yields De Casteljau's algorithm in the same way that the
decompositions in \cref{table:Q-factorizations} led to
\cref{algorithm:qit-quad-alg}.

\begin{algorithm}[H]
  \caption{Evaluate $\gamma(t)$ using De Casteljau's
    algorithm.}\label{algorithm:de-casteljau}
  \begin{algorithmic}
    \REQUIRE{} $t \in \R$ such that $0 \leq t \leq 1$, $\m{p} \in \R^{n + 1 \times d}$
    \ENSURE{} $\m{p}_{n+1,:} \gets \gamma(t)$
    \FOR{$i = 0, \hdots, n - 1$}
      \FOR{$j = n - i + 1, \hdots, n$}
        \STATE{} Set $\m{p}_{j,:} \gets t \cdot \m{p}_{j,:}$
        \STATE{} Set $\m{p}_{j,:} \gets (1 - t) \cdot \m{p}_{j - 1, :}$
      \ENDFOR{}
    \ENDFOR{}
  \end{algorithmic}
\end{algorithm}

Proceeding as before, we can define a matrix analogous to
$\m{B}_{n,m}$ from the matrix $\m{G}_k^{(t)}$. Letting
$\m{c}_0^{(t)} = 0$ and $\m{c}_1^{(t)} = (1 - t, t)$, we inductively
define $\m{c}_i^{(t)}$ by
$\m{c}_i^{(t)} = \m{c}_{i-1}^{(t)} * \m{c}_1^{(t)}$ and the matrix
$\m{C}_{n,m}^{(t)} \in \R^{n - m \times n}$ by:
\begin{displaymath}
  \m{C}_{n,m}^{(t)} = \begin{pmatrix}
    {(\m{c}_m^{(t)})}_{0} & {(\m{c}_m^{(t)})}_{1} & \cdots{} & {(\m{c}_m^{(t)})}_{m} & & \\
    & \ddots & \ddots & & \ddots & \\
    & & {(\m{c}_m^{(t)})}_{0} & {(\m{c}_m^{(t)})}_{1} & \cdots{} & {(\m{c}_m^{(t)})}_{m}
  \end{pmatrix}.
\end{displaymath}
With these matrices in hand, we observe that no essential
characteristics of \cref{lemma:L-factorization-theorem} or
\cref{theorem:main-theorem} are changed by replacing $\m{Q}_n$ with
$\m{\mathcal{B}}^{(t)}_n$ and $\m{B}_{n,m}$ with
$\m{C}_{n,m}^{(t)}$. Correspondingly, replacing these matrices in
\cref{algorithm:serial-two-way-recursion} yields algorithms which
relate to De Casteljau's algorithm for B\'{e}zier curve evaluation in
the same way that \cref{algorithm:serial-two-way-recursion} relates to
the algorithms of \cref{table:Q-factorizations}.

However, we note that the product
$\m{\mathcal{B}}^{(t)}_n \m{p} \in \R^{n + 1 \times d}$ contains
redundant entries which specify other B\'{e}zier curves. The point
$\gamma(t)$ corresponds to the last row of
$\m{\mathcal{B}}^{(t)}_n \m{p}$; we can focus on computing this row
directly to save time. To this end, we provide the following efficient
algorithm.

\begin{algorithm}[H]
  \caption{Evaluate the B\'{e}zier curve $\gamma(t)$ defined by the
    control points $\m{p}^{n + 1 \times d}$.}\label{alg:bez-alg-1}
  \begin{algorithmic}
    \REQUIRE{} $t \in \R$ such that $0 \leq t \leq 1$, $\m{p} \in \R^{n + 1 \times d}$, $k \in \N$ such that $k = O(\sqrt{n \log n})$
    \ENSURE{} $\m{p}_{n+1,:} \gets \gamma(t)$
    \STATE{} Set $\m{p}_{n-k:n+1} \gets \m{C}_{n+1,n+1-k}^{(t)} \m{p}$
    \STATE{} Set $\m{p}_{n-k:n+1} \gets \m{\mathcal{B}}^{(t)}_k \m{p}_{n-k:n+1}$
  \end{algorithmic}
\end{algorithm}

We can expect this algorithm to be stable, owing to the stability
tests performed in \cref{sec:numerical-experiments}. It also runs in
$O(n \log n)$ time, as the next theorem shows. This improves upon the
classical $O(n^2)$ runtime of De Casteljau's algorithm without
compromising its stability beyond the slight numerical instability
introduced by computing the fast Fourier transform.

\begin{theorem}\label{lemma:fast-Bernstein-product-entries}
  Let $n, d \in \N$ such that $d = O_n(1)$ and let
  $\m{p} \in \R^{n + 1 \times d}$. Then, for each $t$ such that
  $0 \leq t \leq 1$ and $i = 0, \hdots, n$, the vector
  $\gamma(t) = \m{e}_{n+1}^\top \m{\mathcal{B}}_n^{(t)} \m{p} \in
  \mathbb{R}^d$ can be computed in $O(d n \log n)$ time.
\end{theorem}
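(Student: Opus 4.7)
The plan is to invoke the two-way ($k = 2$) case of \cref{lemma:L-factorization-theorem}, applied to the Bernstein matrix $\m{\mathcal{B}}^{(t)}_n$ in place of $\m{Q}_n$ and to the convolution matrix $\m{C}^{(t)}_{n,m}$ in place of $\m{B}_{n,m}$; the paragraph preceding \cref{alg:bez-alg-1} already sanctions this substitution, since neither the block structure in the lemma nor the identification of its off-diagonal block with a convolution relies on the specific value $\delta = 1/2$. Taking the partition $n + 1 = (n + 1 - k) + k$, the lemma yields
\begin{displaymath}
  \m{\mathcal{B}}^{(t)}_n \m{p} =
  \begin{pmatrix}
    \m{\mathcal{B}}^{(t)}_{n-k}\, \m{p}_{1:n+1-k,\,:} \\[2pt]
    \m{\mathcal{B}}^{(t)}_{k}\, \m{C}^{(t)}_{n+1,\, n+1-k}\, \m{p}
  \end{pmatrix}.
\end{displaymath}
Since $\gamma(t)$ equals the last row of $\m{\mathcal{B}}^{(t)}_n \m{p}$, only the lower block of this identity is needed, and evaluating that lower block is precisely what the two lines of \cref{alg:bez-alg-1} do. Correctness of the algorithm therefore reduces to the (already-established) lemma.

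For the runtime, the $d$ columns of $\m{p}$ are processed independently, so it suffices to bound the per-column cost and multiply by $d$. The first line of the algorithm computes, for each column, a single valid convolution of a length-$(n+1)$ vector against the length-$(n+2-k)$ binomial-type kernel $\m{c}^{(t)}_{n+1-k}$; zero-padding to a length-$2(n+1)$ cyclic convolution and applying the FFT, this costs $O(n \log n)$ per column. The second line applies the $(k+1) \times (k+1)$ matrix $\m{\mathcal{B}}^{(t)}_k$ using the $\m{G}^{(t)}_j$ factorization (the specialization of \cref{algorithm:de-casteljau} to $k+1$ control points), which costs $O(k^2)$ per column.

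Summing the two contributions, the per-column cost is $O(n \log n + k^2)$. Choosing $k = O(\sqrt{n \log n})$ exactly balances the two terms and absorbs $k^2$ into $O(n \log n)$, so the per-column cost is $O(n \log n)$ and the overall cost is $O(d n \log n)$. The only step that requires any care is verifying that \cref{lemma:L-factorization-theorem} transfers cleanly from $\m{Q}_n$ to $\m{\mathcal{B}}^{(t)}_n$; once that transfer is justified the remainder is bookkeeping on FFT and base-case costs, and the balancing of $k$ is forced by the requirement $k^2 = O(n \log n)$.
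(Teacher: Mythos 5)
Your proposal is correct and follows essentially the same route as the paper: the paper's proof is exactly the runtime bookkeeping you give in your last two paragraphs (FFT-based convolution at $O(n \log n)$ per column for the first line of \cref{alg:bez-alg-1}, and $O(k^2) = O(n \log n)$ per column for the second line since $k = O(\sqrt{n \log n})$), while the correctness of the block decomposition for $\m{\mathcal{B}}^{(t)}_n$ is handled in the paper by the prose preceding the algorithm rather than inside the proof. Your additional explicit justification of the transfer of \cref{lemma:L-factorization-theorem} to the Bernstein case is a welcome filling-in of that gap, not a departure from the paper's argument.
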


\begin{proof}
  To show that \cref{alg:bez-alg-1} runs in $O(n \log n)$ time, simply
  observe that the first multiplication requires $O(n \log n)$
  operations per column of $\m{p}$; likewise, since
  $k = O(\sqrt{n \log n})$, the latter multiplication does, as well.
\end{proof}

An even simpler algorithm can be obtained by setting $k = 0$ in
\cref{alg:bez-alg-1}. This algorithm is referred to as the
Bernstein-Fourier algorithm elsewhere~\cite{bezerra2013efficient,
  delgado2015accurate}. In this case, the algorithm is just the update
$\m{p} \gets \m{C}_{n+1,n+1}^{(t)} \m{p}$. For both algorithms, we
note that (as is likely to be the case) the B\'{e}zier curve is to be
evaluated at multiple points, say
$0 \leq t_1 \leq \cdots \leq t_N \leq 1$, then the cost of the forward
FFT in applying $\m{C}_{n+1,n+1}^{(t)}$ is amortized with respect to
$N$, reducing the cost of batch evaluation of B\'{e}zier curves.


\section{Conclusion}

In this work, we presented novel algorithms for the multiplication and
inversion of the lower and upper Pascal matrices and their
$\ell_\infty$-normalized versions. In particular, we presented a full
complement of in-place $O(n^2)$ algorithms which are fast for small
problem sizes, and incorporated these into robust $O(n \log^2 n)$
algorithms which are competitive for all problem sizes. We numerically
demonstrated that previous algorithms based on $O(n \log n)$ Toeplitz
matrix factorization are too unstable to be useful. We also showed
that the Bernstein matrices generalize the $\ell_\infty$-normalized
pascal matrix, and that our algorithms correspond to De Casteljau's
algorithm for stable B\'{e}zier curve evaluation, thus providing a
novel $O(n \log n)$ algorithm for this task.

In the future, there are several avenues of work that we will explore:
\begin{itemize}
\item The Pascal matrix occurs repeatedly in the translation theory of
  the fast multipole method~\cite{fmm-book}, in particular for the
  one-dimensional Cauchy and Hilbert fast multipole methods. We will
  explore an implementation of the fast multipole method whose
  translation operators are expressed in terms of Pascal matrices.
\item The matrix representations of a variety of discrete polynomial
  transforms can be factored into products involving Pascal matrices
  and diagonal matrices~\cite{aburdene1994unification}. We will
  explore fast discrete polynomial transforms based on fast Pascal
  matrix algorithms and possible applications. In particular, a fast
  discrete Legendre transform effectively yields a fast spherical
  harmonic transform~\cite{healy2003ffts}.
\end{itemize}
\noindent It remains an open question whether a fast \emph{and stable}
$O(n \log n)$ Pascal matrix algorithm exists. Such an algorithm would
have manifold application, and this is also an avenue which we will
pursue in the future.

The code used to generate the results here as well as a library
containing a sample implementation of the algorithms used here will be
available publically presently.


\bibliographystyle{plain}
\bibliography{ms}{}

\end{document}